\newtheorem{lemma}{Lemma}
\newtheorem{theorem}{Theorem}
\newtheorem{claim}{Claim}
\newtheorem{remark}{Remark}
\begin{document}
	\title{A short proof of the existence of a minor-universal countable planar graph}
	\author{George Kontogeorgiou}
	\maketitle
	
	An element of a class of graphs is \emph{minor-universal} for the class if every other element of the class is its minor. In \hspace{-1mm}\cite{DK}, Diestel and K\"uhn constructed a minor-universal countable planar graph. Their construction is rather involved. The purpose of this note is to present a significantly simpler construction and proof, based on a suggestion of Georgakopoulos (\hspace{-1mm}\cite{G}, p.7) to modify the construction of \cite{Gagain}. The idea is as follows: 
	
	\begin{enumerate}[i.]
	\item $\mathcal{G}_0$ is a copy of $C_3$ embedded in $\mathbb{S}^2$;
	\item for $n>0$, $\mathcal{G}_n$ is the plane graph obtained from $\mathcal{G}_{n-1}$ in the following way: in each face $F$ of $\mathcal{G}_{n-1}$ we place a vertex $v_F$, we connect $v_F$ to each vertex $u\in\partial F$ with an edge, and we subdivide each edge $\{v_F,u\}$ exactly $n$ times; 
	\item we define $\mathcal{G}:=\bigcup_{n=0}^{\infty}\mathcal{G}_n$. 
	\end{enumerate}

	\begin{figure}[!h]
		\centering
		\includegraphics[width=80mm]{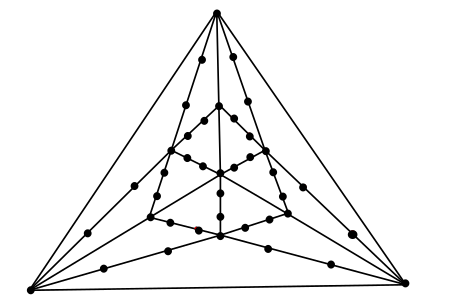}
		\caption{$\mathcal{G}_2$ is formed by gluing two copies of this graph at the outer triangle.} 
	\end{figure}

	In this note I prove:
	
	\begin{theorem}\label{main}
		The graph $\mathcal{G}$ is minor-universal for countable planar graphs.
	\end{theorem}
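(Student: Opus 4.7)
My plan is to construct a model of $H$ in $\mathcal{G}$ by a recursive procedure that mirrors the construction of $\mathcal{G}$ itself. First, I would reduce to the case that $H$ is a countable $2$-connected plane triangulation: every countable planar graph embeds as a subgraph of such a triangulation (obtained by triangulating each face of an embedding, using a central vertex in any infinite face if necessary), and the subgraph relation preserves minors. Fix such an $H$ together with a planar embedding in $\mathbb{S}^2$.

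The driving observation is a self-similarity of $\mathcal{G}$: for every $n\geq 0$ and every face $F$ of $\mathcal{G}_n$, the subgraph $\mathcal{G}[\overline F]$ of $\mathcal{G}$ drawn in the closed face $\overline F$ is itself produced by the same wheel-and-subdivide construction, now starting from the cycle $\partial F$ at level $n+1$. The main lemma I would prove is: \emph{for every face $F$ of any $\mathcal{G}_n$ and every finite plane triangulation $T$ whose outer face is a cycle of length $|\partial F|$, there is a model of $T$ in $\mathcal{G}[\overline F]$ whose outer branch cycle is exactly $\partial F$.} I would prove this by induction on $|V(T)|$: in the inductive step, pick an internal vertex $t$ of $T$, realise $t$ by the branch set $\{v_F\}$, realise each edge of $T$ incident to $t$ by a spoke of $v_F$ (concatenated with a short arc of $\partial F$ where needed), and apply the inductive hypothesis inside each of the resulting subfaces of $\mathcal{G}_{n+1}$ lying in $F$ to realise the corresponding piece of $T-t$.

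Given this lemma, Theorem~\ref{main} follows by exhausting $H$ with an ascending sequence of finite near-triangulations and iteratively extending the partial model face-by-face, using the lemma to fill each new face. The main obstacle is the lemma itself, specifically the boundary bookkeeping: the edges of $T$ at $t$ must be routed through spokes and arcs of $\partial F$ so that each resulting subface of $\mathcal{G}_{n+1}$ in $F$ inherits exactly the right boundary cycle for the inductive step on the corresponding piece of $T-t$. The key source of flexibility is that the spoke-subdivision count grows with recursion depth, so at each level we have arbitrarily many spoke-vertices available to distribute among the finite combinatorial pieces of $T-t$; combined with the freedom to choose which internal vertex to peel off first, this should suffice to realise any finite triangulation inside any face.
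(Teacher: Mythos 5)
Your overall strategy (reduce to a nicer class, then build the model recursively using the self-similar structure of $\mathcal{G}$) is in the right spirit, but the key lemma's induction has a genuine gap that the acknowledged ``boundary bookkeeping'' does not cover, because the obstruction is not about subdivision vertices at all. When you peel an internal vertex $t$ of $T$ and map it to $v_F$, two things go wrong. First, $t$ may have neighbours that are themselves internal vertices of $T$; the corresponding spokes of $v_F$ end on $\partial F$, which you have reserved for the outer branch cycle, and a branch set confined to the interior of a spoke has only two edges leaving it, so it cannot host a vertex of degree $\geq 3$. More generally, $T-t$ decomposes into disk pieces matching the fan of subfaces of $\mathcal{G}_{n+1}$ around $v_F$ only when \emph{all} neighbours of $t$ lie on the outer cycle, which need not hold for any internal vertex (consider a triangulation whose internal vertices form nested layers). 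Second, $\deg_T(t)$ can exceed $|\partial F|=\deg(v_F)$ (e.g.\ outer cycle a triangle, $t$ of degree $10$ inside), so there are not enough spokes; the flexibility you invoke --- that spokes get longer with depth --- adds subdivision vertices, not spokes, and is therefore irrelevant to this mismatch. Fixing it would force you to decouple the outer cycle length of $T$ from $|\partial F|$ and to route edges through $\mathcal{G}[\overline F]$ by paths that are not spokes, at which point you need a quantitative connectivity statement and Menger's theorem --- which is essentially the paper's machinery. A related gap appears at the top level: after one exhaustion step the image of a face boundary of the current finite piece of $H$ is some cycle of $\mathcal{G}$ that is \emph{not} the boundary of a face of any $\mathcal{G}_m$, so your lemma, whose hypothesis is that the outer cycle sits exactly on some $\partial F$, cannot be applied to the next piece without an additional re-routing argument.

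The paper avoids both problems by a different reduction and a different decomposition. Instead of triangulations it first shows (Lemma \ref{start}) that every countable planar graph is a minor of a countable \emph{sub-cubic} $2$-connected planar graph, which caps all degrees at $3$ and kills the spoke-counting issue at the source; and instead of peeling vertices of a triangulation it uses an ear decomposition, adding one path at a time. Each new ear is routed to the diameter of a sufficiently deep face $f$ of $\mathcal{G}$ by disjoint paths obtained from Menger's theorem inside a ``slice'', using the quantitative fact (Claim \ref{slice}) that $n$-slices are $(n+3)$-connected; this is precisely the mechanism that reconciles the arbitrary combinatorics of $G$ with the rigid fan structure of $\mathcal{G}$, and it is the ingredient your proposal is missing. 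Your reduction step also deserves care independently: triangulating the faces of an arbitrary countable plane graph (whose face boundaries need not be cycles, and whose vertices may acquire infinite degree) is not routine, whereas the sub-cubic reduction is cited and elementary.
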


	By its construction, $\mathcal{G}$ is countable and planar. It remains to show that it has every other countable planar graph as a minor. This follows directly from the following two lemmata:
	
	\begin{lemma}\label{start}
		Every countable planar graph is a minor of a countable, sub-cubic, 2-connected planar graph.
	\end{lemma}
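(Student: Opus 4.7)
The plan is to obtain $H$ from $G$ by two standard local modifications after brief preprocessing. First I would reduce to the case where $G$ is connected and locally finite: arbitrary components can be joined by planar arcs in the embedding, and for each vertex $v$ of infinite degree I would list its neighbours $u_1,u_2,\ldots$ in the cyclic order induced by the embedding and replace $v$ with a ray $v_1 v_2\cdots$, attaching $u_i$ to $v_i$. Since the ray can be drawn inside a small neighbourhood of $v$, the resulting graph $G^{\circ}$ is a countable, connected, locally finite plane graph, and contracting each ray recovers $G$; so $G$ is a minor of $G^{\circ}$.

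From $G^{\circ}$ I would then produce $H$ by:
\begin{enumerate}[(i)]
\item \emph{Edge doubling}: replace every edge $e=uv$ by two internally disjoint length-$2$ paths $u\,a_e\,v$ and $u\,b_e\,v$, so that each edge of $G^{\circ}$ lies on a $4$-cycle of $H$;
\item \emph{Cycle blow-up}: for every $v\in V(G^{\circ})$ of degree $d\ge 2$, replace $v$ by a cycle $C_v$ of length $2d$, attaching the $2d$ subdivision-vertex neighbours of $v$ around $C_v$ in the cyclic order prescribed by the embedding. Degree-$1$ vertices, which acquire degree $2$ from doubling, are left alone.
\end{enumerate}
Both steps are manifestly planarity-preserving and countable, and contracting each $C_v$ and then each $b_e$-path recovers $G^{\circ}$, so $G$ is a minor of $H$.

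The remaining properties are verified locally. Each $a_e,b_e$ has degree $2$, each $v_i\in C_v$ has degree $2+1=3$, and each surviving degree-$1$ vertex has degree $2$ in $H$, so $H$ is sub-cubic. For $2$-connectivity I would remove any vertex $x$ of $H$ and check the rest stays connected: if $x=a_e$, the parallel path through $b_e$ keeps the two endpoints of $e$ joined; if $x\in C_v$, then $C_v-x$ is a path along which all subdivision-vertex attachments of $C_v$ remain mutually connected, while the subdivision vertex once attached to $x$ still reaches $C_u$ (where $e=vu$) via its other endpoint; the case of a surviving degree-$1$ vertex is analogous.

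The main obstacle I anticipate is the infinite-degree preprocessing step: one must formalise the cyclic order of an infinite edge-set at a vertex in a plane embedding (which in general is an order of type $\omega$, $\omega^*$, $\omega+\omega^*$, or finite) and verify that the corresponding ray replacement really yields a plane graph. Once this combinatorial subtlety is dealt with, the rest of the argument is a routine local check.
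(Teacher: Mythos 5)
Your reduction to the sub-cubic, $2$-connected case (edge doubling followed by the cycle blow-up in the cyclic order of the embedding) is sound and is essentially the ``fattening'' the paper invokes. The genuine gap is in your preprocessing of infinite-degree vertices, and it sits exactly at the point you flag as a combinatorial subtlety: the claim that the rotation at a vertex of infinite degree has order type $\omega$, $\omega^*$, $\omega+\omega^*$ or finite is false, and the ray replacement fails without it. A caterpillar (a ray $v_1v_2\cdots$ with one pendant edge at each $v_i$) drawn in a disc can only present its leaves on the boundary circle in a cyclic order obtained by concatenating an increasing enumeration of some spine indices with a decreasing enumeration of the rest, i.e.\ an order of type at most $\omega+\omega^*$; a dense cyclic order cannot be realized. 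But dense rotations occur and cannot be embedded away. Take the infinite stacked triangulation: start with a triangle, place in each face a new vertex joined to the three corners, and iterate forever in every face. Each corner $x$ of the original triangle acquires infinite degree, and every subdivision step inserts a new neighbour of $x$ strictly between two existing ones in the rotation, so the rotation at $x$ is order-isomorphic to $\mathbb{Q}$. This graph is $3$-connected, so its combinatorial embedding is unique, and hence every embedding has this dense rotation at $x$. If your $G^{\circ}$ (with $x$ replaced by a ray) were planar, contracting the ray inside a plane embedding of $G^{\circ}$ would produce an embedding of the stacked triangulation whose rotation at $x$ has type at most $\omega+\omega^*$ --- a contradiction. (The preliminary step of joining components by arcs also needs a word when the components are infinitely many and nested, but that is easily arranged.)

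The repair is the one the paper uses: blow up each infinite-degree vertex to a locally finite \emph{tree} rather than a ray. A suitable tree --- for instance one modelled on the dyadic decomposition of the circle --- can present its leaves on the boundary of a disc in any prescribed countable cyclic order, so the substitution preserves planarity, and contracting the tree recovers $G$. Your two local modifications can then be applied verbatim to the resulting locally finite graph.
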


	\begin{proof}
		Let $G\in \mathcal{C}$. It is simple to construct a countable planar graph $G'\succ G$ in $\mathcal{C}$ that is 2-connected and has finite maximum degree by blowing up each vertex to a locally finite tree and taking the fattening, for details see (\hspace{-0.5mm}\cite{G}, Chapter 4). We blow up each vertex of $G'$ to a finite trivalent tree to get the promised graph $G''$.
	\end{proof} 
	
	\begin{lemma}\label{finish}
		Every countable, sub-cubic, 2-connected planar graph is a minor of $\mathcal{G}$.
	\end{lemma}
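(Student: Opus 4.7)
Since $H$ has maximum degree at most $3$, a standard fact gives that $H$ is a minor of $\mathcal{G}$ if and only if $H$ is a topological minor of $\mathcal{G}$ (for each branch set we can keep only a Steiner tree through the at most three attachment points, which is already a subdivided $K_{1,d}$). So it suffices to embed $H$ as a subdivision inside $\mathcal{G}$. Fix a planar embedding of $H$ in $\mathbb{S}^2$, and take an open ear decomposition $H = \bigcup_{n \geq 0} H_n$, where $H_0$ is a cycle and $H_{n+1} = H_n \cup P_{n+1}$ is obtained by adjoining an ear $P_{n+1}$ (a path internally disjoint from $H_n$ with both endpoints in $H_n$); each $H_n$ is then a finite $2$-connected plane subgraph of $H$. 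Planarity of $H$ forces the interior of each new ear $P_{n+1}$ to lie inside a single face of the plane embedding of $H_n$.

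I build topological embeddings $\phi_n : H_n \hookrightarrow \mathcal{G}$ inductively as plane graphs, with $\phi_{n+1}$ extending $\phi_n$, maintaining this invariant: for every face $F$ of the plane embedding of $H_n$, the corresponding region $R_F$ of $\mathbb{S}^2 \setminus \phi_n(H_n)$ contains, in its interior, an \emph{untouched} face $F^*$ of some $\mathcal{G}_{m_F}$, meaning that the entire descendant substructure of $F^*$ (the central vertex $v_{F^*}$, its subdivided spider, the sub-faces of $\mathcal{G}_{m_F+1}$ inside $F^*$, and so on through all deeper levels) is disjoint from $\phi_n(H_n)$. Informally, each face of the current image still carries a fresh copy of the recursive construction, available for future extensions.

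In the extension step, for a new ear $P_{n+1} = au_1\cdots u_kb$ embedded in face $F$ of $H_n$, I route it via the anchor vertex $v_{F^*}$ inside $R_F$: take a path in $R_F \cap \mathcal{G}$ from $\phi_n(a)$ to $v_{F^*}$, cross through $v_{F^*}$ along two of its spider-arms, and continue to $\phi_n(b)$, placing the images $\phi_{n+1}(u_i)$ at vertices chosen deep enough in the recursion that each retains its own fresh substructure. The main obstacle is preserving the invariant: the new ear splits $R_F$ into two sub-regions, and each one must inherit an untouched face of some $\mathcal{G}_{m'}$. Here the self-similar structure of $\mathcal{G}$ is decisive: $F^*$ itself is subdivided by $v_{F^*}$ into $|\partial F^*|$ sub-faces at level $m_F+1$, each with its own descendant substructure at level $m_F+2$ and beyond. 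By selecting the two spider-arms through $v_{F^*}$ with care, we can arrange that at least one untouched sub-face of $F^*$ lands on each side of the new ear, carrying the invariant to $H_{n+1}$; a parallel device handles the $u_i$'s, which we place at central vertices of deeper faces so that fresh substructures remain available on whichever side a future ear will attach. Taking $\phi := \bigcup_n \phi_n$ yields the desired subdivision of $H$ in $\mathcal{G}$.
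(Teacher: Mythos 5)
Your plan is a genuinely different route from the paper's (you fix branch vertices once and for all and build a subdivision, maintaining a ``fresh untouched sub-structure'' invariant, whereas the paper grows path-shaped branch sets and maintains a quantitative connectivity invariant), but as written it has a gap at the heart of the inductive step. Your invariant only guarantees that the region $R_F$ \emph{contains} an untouched face $F^*$ somewhere in its interior; it does not guarantee that $v_{F^*}$ is \emph{reachable} from $\phi_n(a)$ and $\phi_n(b)$ inside $R_F$, let alone by two internally disjoint paths (which you need, since the two halves of the ear must meet only at the image of the ear). The subgraph of $\mathcal{G}$ induced on the vertices strictly inside the boundary cycle of $R_F$ need not be connected: if the image of $\partial F$ passes through two vertices $u_1,u_2$ that happen to be adjacent in $\mathcal{G}$ without using the edge $u_1u_2$, that edge is a chord lying in $R_F$ whose open interior contains no vertex of $\mathcal{G}$, and it splits $R_F$ into two sub-disks between which no path of $\mathcal{G}$ can cross without touching $\phi_n(H_n)$. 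Nothing in your routing rule prevents such chords (or, more generally, small bottlenecks) from being created by earlier ears, and then $F^*$ may sit on the wrong side of one, or $\phi_n(a)$ and $\phi_n(b)$ may be forced through a common cut vertex. This is exactly the problem the paper's Claim~\ref{slice} is designed to solve: face images are kept equal to structurally defined \emph{slices}, whose high connectivity is provable, and which meet the current image only at one designated end vertex per branch set, so Menger delivers the required disjoint paths. To repair your argument you would need to strengthen the invariant from ``$R_F$ contains a fresh face'' to something that controls the connectivity of $R_F$ relative to the attachment points --- at which point you are essentially rebuilding the paper's invariant.

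Two smaller points. First, keeping $\phi(a)$ fixed forever means that when an ear later attaches at $a$ you need an unused edge of $\mathcal{G}$ at $\phi(a)$ entering the specific region $R_F$; this is true (every vertex of $\mathcal{G}$ eventually acquires infinitely many edges into each face of each $\mathcal{G}_M$ incident to it), but it is a fact about $\mathcal{G}$ that must be stated and proved, and it is the reason the paper prefers to extend the branch set of $a$ by a new path into the slice rather than re-use a fixed vertex. Second, your routing passes through a single hub $v_{F^*}$, but an ear may have many internal branch vertices, each needing future access to a prescribed side; your ``parallel device'' for these is not compatible with the described two-spider-arm route and would need to be spelled out. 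Neither of these is fatal, but the reachability/disjointness issue in the first paragraph is a genuine missing ingredient.
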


	The rest of this note is devoted to proving Lemma \autoref{finish}. 
	
	\section*{Proof of Lemma \autoref{finish}} 
	
	An \emph{ear decomposition} of a countable graph $G$ is a sequence $(G_n)_{n\in \mathbb{N}}$ of subgraphs of $G$ such that:
	\begin{enumerate}[i.]
		\item $G_0$ is a cycle;
		\item for $n>0$, either $G_n=G_{n-1}$ or $G_n$ is a simple graph obtained from $G_{n-1}$ by connecting two distinct vertices of $G_{n-1}$ with a new path (called an \emph{ear} in this context);
		\item $G=\bigcup_{n=0}^{\infty}G_n$.
	\end{enumerate}

	We first prove two claims that are used in our proof.
	 
	 \begin{claim} \label{decomp}
	 	Every 2-connected countable graph $G$ with at least three vertices admits an ear decomposition.
	 \end{claim}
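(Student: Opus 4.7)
The plan is to construct the sequence $(G_n)$ by exhausting the edges of $G$ one at a time. Enumerate $E(G)$ as $e_1, e_2, \ldots$ (this is where countability enters) and take $G_0$ to be any finite cycle of $G$: such a cycle exists because, by 2-connectivity, the endpoints of any edge of $G$ are joined by two internally disjoint paths whose union forms a cycle. At stage $n\geq 1$ I set $G_n := G_{n-1}$ if $e_n \in E(G_{n-1})$; otherwise I construct a path $P_n$ in $G$ whose endpoints are two distinct vertices of $G_{n-1}$, whose internal vertices lie in $V(G)\setminus V(G_{n-1})$, and which contains $e_n$, and set $G_n := G_{n-1}\cup P_n$. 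Since every $e_n$ ends up in $G_n$ and every vertex of $G$ is incident to some edge, the union $\bigcup_n G_n$ equals $G$.

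The substance of the argument is the construction of $P_n$. To perform it, I form an auxiliary graph $G^+$ by adjoining to $G$ a new vertex $t$ adjacent to every vertex of $V(G_{n-1})$. I check that $G^+$ is 2-connected: $G^+ - t = G$ is connected, and for any other vertex $v$, the graph $G^+ - v$ contains the connected subgraph $G - v$ together with $t$ joined to the nonempty set $V(G_{n-1})\setminus \{v\}$. Now, by the classical fact that any two edges of a 2-connected graph lie on a common cycle, there is a cycle $C$ in $G^+$ containing both $e_n$ and some edge $tw$ with $w \in V(G_{n-1})$. Deleting $t$ from $C$ leaves a path $P$ in $G$ joining two distinct vertices of $V(G_{n-1})$ and still containing $e_n$ (as $e_n$ is not incident to $t$). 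Truncating $P$ between the last vertex of $V(G_{n-1})$ preceding $e_n$ and the first such vertex following it yields the desired ear $P_n$.

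The only real difficulty is producing the ear $P_n$, which is handled by this auxiliary-graph trick; the rest of the proof --- choosing $G_0$, the trivial case $e_n \in E(G_{n-1})$, and the exhaustion argument --- is bookkeeping.
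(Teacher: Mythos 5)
Your proof is correct and follows essentially the same route as the paper: enumerate the edges, start from a cycle, and at each stage extract an ear containing the smallest-index missing edge from a cycle supplied by the classical fact that two edges of a 2-connected graph lie on a common cycle, then argue exhaustion. The only difference is the gadget used to produce that cycle --- the paper takes a cycle through $e_n$ and the fixed edge $e_0\in G_0\subseteq G_{n-1}$ and truncates it against $G_{n-1}$, whereas you pass to an auxiliary graph with an apex vertex over $V(G_{n-1})$ and delete the apex; both yield the same ear.
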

 
 	\begin{proof}
 		We order the edges of $G$. By the infinite version of Menger's Theorem (\hspace{-0.5mm}\cite{H}), there exists a cycle $C$ that contains $e_0$. Set $G_0:=C$. Given $G_{n-1}$, we determine $G_n$ as follows. If $G_{n-1}=G$, then set $G_n:=G$. Otherwise, let $e$ be the edge outside of $G_{n-1}$ with the smallest index. It is a well-known exercise that there exists a cycle $C_e$ in $G$ that contains both $e$ and $e_0$. Let $P_e$ be the subpath of $C_e$ that meets $G_{n-1}$ only at its endpoints and contains $e$. Set $G_n:=G_{n-1}\cup P_e$. To see that $G=\bigcup_{n=0}^{\infty}G_n$, note that $e_n\in G_n$.   
 	
 	\end{proof}
 
 	\begin{remark} \label{facial}
 		If $G$ is a plane graph, then the $n^{th}$ ear of its ear decomposition lies in a face of $G_{n-1}$.
 	\end{remark}
 
 	An $n$-\textit{diameter} of $\mathcal{G}$ is a maximal path contained in $\mathcal{G}_{n+1}\setminus\mathcal{G}_n$. An $n$-\textit{slice} of $\mathcal{G}$ is a subgraph that is bounded by (but does not contain the edges of) the boundary of a face of $\mathcal{G}_n$ and perhaps one of the diameters of that face, and does not contain $\mathcal{G}_0$. An $n$-\textit{piece} is a subgraph of $\mathcal{G}$ of the form $\overline{F}\cap\mathcal{G}$ or $(\overline{F}\cap\mathcal{G})\setminus\mathcal{G}_n$ for some face $F$ of $\mathcal{G}_n$. Recall that, given a separator $S$ of a graph $G$ and a connected component $K\subseteq G\setminus S$, the \emph{torso} of $K$ is the induced subgraph of $G$ on the vertices $V(K)\cup S$.

 	\begin{figure}[!h]
 		\centering
 		\includegraphics[width=60mm]{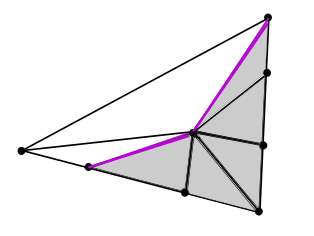}
 		\caption{A purple $2$-diameter bounding a grey area that contains a $2$-slice.} 
 	\end{figure}

	\begin{claim} \label{slice}
		Every $n$-slice is $(n+3)$-connected.
	\end{claim}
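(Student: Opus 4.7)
The plan is to verify $(n+3)$-connectivity by Menger's criterion, i.e.\ by showing that no vertex set $T\subseteq V(S)$ with $|T|\le n+2$ disconnects $S$. I will exploit that $S$ embeds in the closed disk $\overline{F}$, contains the central vertex $v_F$ joined by internally disjoint subdivided rays of length $n+2$ to each of the $2n+3$ vertices of $\partial F$, and has an $(n+1)$-sub-slice in each of the $2n+3$ sub-faces of $F$; a half-slice has the analogous structure with one pair of rays and the region beyond it replaced by the bounding diameter.

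Suppose for contradiction that $T$ is such a cut. Applying a discrete planar-separator argument to the finite approximants $\mathcal{G}_m\cap\overline{F}$ for sufficiently large $m$, the cut realises topologically as either (i) a Jordan arc in $\overline{F}$ from $\partial F$ to $\partial F$ passing through $T$-vertices only, or (ii) a Jordan closed curve in the interior of $\overline{F}$ passing through $T$-vertices only.

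In case (i), the $T$-vertices form a sequence across $F$ (with consecutive pairs lying on a common face of $S$), constituting in essence a chord of $F$ in $S$. The crucial observation is that the shortest such chord is an $n$-diameter through $v_F$, which has $2n+3$ internal vertices; hence $|T|\ge 2n+3 > n+2$, a contradiction. In case (ii), the curve encloses a non-empty subgraph of $S$, and $T$ must contain its external vertex-neighbourhood. Using that every vertex of $S$ has degree at least $n+3$---in fact infinite, since each vertex acquires new neighbours from the central vertex of every adjacent face at every subsequent subdivision stage---combined with an isoperimetric-type bound that exploits the recursive subdivided structure to rule out neighbourhood collapse, any enclosure of a non-empty subgraph requires at least $n+3$ boundary vertices, again contradicting $|T|\le n+2$.

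The main obstacle is the rigorous execution of the planar-separator reduction for the infinite, non-locally-finite graph $S$: I would handle this by passing to the finite approximants $\mathcal{G}_{n+k}\cap\overline{F}$, applying the finite planar separator theorem there, and arguing that a hypothetical small cut in $S$ restricts to a cut of the same size in some sufficiently deep approximant. The isoperimetric bound in case (ii) also deserves attention, since a naive degree-based argument only controls single-vertex enclosures; I expect the recursive structure of $S$ (in particular the fact that small regions are pierced by spine rays or sub-slice rays at many nested levels) to give the required lower bound on the vertex-boundary of larger regions.
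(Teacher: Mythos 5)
Your overall strategy---realise a hypothetical small cut $T$ as a Jordan arc or closed curve and bound the number of vertices it must pass through---is workable in principle, but the two quantitative steps that would make it work are exactly the ones you leave unproven, and the reduction to finite approximants does not supply them. A cut of the slice does restrict to a cut of $\mathcal{G}_m\cap\overline{F}$ for every large $m$, but these finite approximants are full of genuinely small separators (every subdivision vertex created at stage $m$ has degree $2$ in $\mathcal{G}_m$), so no contradiction can be extracted from an approximant alone: one must use that the two sides of $T$ stay separated in the \emph{infinite} graph, where those degree-$2$ vertices acquire infinitely many further neighbours. Concretely, in your case (i) the condition ``consecutive $T$-vertices lie on a common face'' refers to faces of $\mathcal{G}_m$ for whatever level $m$ the cut actually lives at (or to faces of $\mathcal{G}$ itself, which are delicate to even define here), not to faces of $\mathcal{G}_{n+1}$; so the comparison with the $n$-diameter of $F$ is not the relevant one and the bound $|T|\ge 2n+3$ is unjustified. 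In case (ii) you explicitly defer the isoperimetric inequality for enclosed regions larger than a single vertex, which is essentially the entire content of the claim.

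The idea you are missing is the one the paper uses, and it replaces both of your deferred bounds by a single observation. Since $T$ is finite, it is contained in $\mathcal{G}_N$ for some $N\ge n$. Everything added inside a face $f$ of $\mathcal{G}_N$ at stages $N+1, N+2,\dots$ is connected and avoids $T$, so each component of the slice minus $T$ is a union of whole $N$-pieces (its torso is ``$N$-coarse''). The boundary between such a component and its complement is therefore measured at scale $N$, and the shortest possible such boundary is half an $N$-diameter, giving $|T|\ge N+3\ge n+3$. Working at the level $N$ where the separator actually lives is what makes the ``faces of which approximant?'' problem disappear, and it is why the bound does not degrade (in fact it improves) as the cut retreats to deeper levels of the construction.
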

 
 	\begin{proof}
 	Let $J$ be an $n$-slice and $S$ be a separator of $J$. We say that a connected component is $N$-\emph{coarse} for $N\ge n$ if its torso in $J$ is the union of a finite set of $N$-pieces. Since $S$ is finite, it is entirely contained in some $\mathcal{G}_N$ for $N\ge n$, so it splits $J$ into $N$-coarse components. Therefore $S$ must have order at least $N+3\ge n+3$, since $N+3$ is half the length of an $N$-diameter rounded up, which is the shortest boundary that an $N$-coarse component can have with its complement in $J$. 
 	\end{proof}
 
 Recall that an \textit{inflated copy} (briefly denoted as $IG$) of a graph $G=(V,E)$ is a graph $H$ together with a \textit{map} $\phi:G\rightarrow H$ such that for every $v\in V$ and $e\in E$ with ends $\{x,y\}$, the following are true:
 
 \begin{enumerate}[i.]
 	\item $\phi(v)$ is a connected subgraph of $H$;
 	\item $\phi(e)$ is an open path between $\phi(x)$ and $\phi(y)$;
 	\item the images of the elements of $G$ are pairwise disjoint;
 	\item $\phi(G)=H$.
 \end{enumerate}

A graph $G$ is a minor of a graph $G'$ if and only if $G'$ contains an $IG$ as a subgraph. We are now ready to prove Lemma $\autoref{finish}$:
 	
 	\begin{proof}
 		Let $G$ be a plane graph as in the statement of Lemma \autoref{finish}. We will construct a subgraph $H$ of $\mathcal{G}$ that is an $IG$. In particular, we will define $H$ as the union of a sequence $H_n$ of $IG_n$, where $(G_n)_{n\in\mathbb{N}}$ is an ear decomposition of $G$ (Claim \autoref{decomp}), such that for every $n\in \mathbb{N}$, $v\in V(G)$, and $e\in E(G)$, we have for the corresponding maps that $\phi_n(v)\subseteq \phi_{n+1}(v)$ and $\phi_n(e)=\phi_{n+1}(e)$. This will allow us to define $\phi:G\rightarrow H$ by setting $\phi:=\bigcup_{n=0}^{\infty}\phi_n$; the reader is invited to check that this yields an IG. To satisfy these constraints, we also require a consistent way to correspond abstract faces of $G_n$ to slices of $\mathcal{G}$. To keep notation simple, we overload our maps $\phi_n$ to also perform this function.
 		
 		The following definition is key in our analysis: given a face $F$ of $G_n$, we denote by $V_F$ the set of vertices of $\partial F$ that are ends of ears contained in $F$. Note that, since $G$ is sub-cubic, each vertex can be the end of at most one ear.      
 		
 		We construct inductively the sequence $\phi_n$ with the additional useful requirements that for each $v\in V(G_n)$, $\phi_n(v)$ is a path, and that for each face $F$ of $G_n$, the slice $\phi_n(F)$ is $|V_F|$-connected and intersects $H_n$ only at one end vertex of each of the paths $\{\phi_n(v)|v\in\partial F\}$ and in the correct cyclic order.  
 		
 		For the base case, suppose that $G_0$ has length $l_0\ge 3$. We consider two disjoint face cycles $C_0$ and $C'_0$ of the graph $\mathcal{G}_{l_0}$, contained in the same ($l_0-2$)-piece, noting that each has length $2l_0+3>l_0$. By Menger's Theorem and Claim \autoref{slice}, we may also consider $l_0$ disjoint paths  inside the piece joining $C_0$ and $C'_0$. We define $\phi_0$ by mapping the vertices of $G_0$ to these paths in the correct cyclic order, each edge of $G_0$ to an appropriate (open) subpath of $C_0$, and the two faces $F_0$ and $F'_0$ of $G_0$ to the two $l_0$-slices $J_0$ and $J'_0$ bounded by $C_0$ and $C'_0$, respectively. Both $J_0$ and $J'_0$ are $(l_0+3)$-connected, and $l_0+3>l_0=|V_{F_0}|+|V_{F'_0}|$, so $F_0$ is mapped to a $|V_{F_0}|$-connected slice and $F'_0$ is mapped to a $|V_{F'_0}|$-connected slice.     
 		
 		For the inductive step, suppose that we have defined $\phi_{n-1}$ so that each vertex $v$ is mapped to a path $\phi_{n-1}(v)$ and each face $F$ is mapped to a $|V_F|$-connected slice $\phi_{n-1}(F)$ that intersects $H_{n-1}$ only at one end vertex of each of the paths $\{\phi_{n-1}(v)|v\in\partial F\}$ and in the correct cyclic order. 
 		
 		Let $F$, of boundary length say $l$, be the unique (Remark \autoref{facial}) face of $G_{n-1}$ such that the $n^{th}$ ear $G_n\setminus G_{n-1}$, of length say $L$, lies within $F$, splitting it to two faces $F'$ and $F''$. Let $f$ be a face of $\mathcal{G}_N$ for some $N>n$ such that $f$ is bounded by the slice $\phi_{n-1}(F)$, has diameter length at least $L$, and all its slices have connectivity at least $2L+l$ (e.g. take $N=2L+l-3$) (Claim \autoref{slice}).
 		
 		 \begin{figure}[!h]
 			\centering
 			\includegraphics[width=120mm]{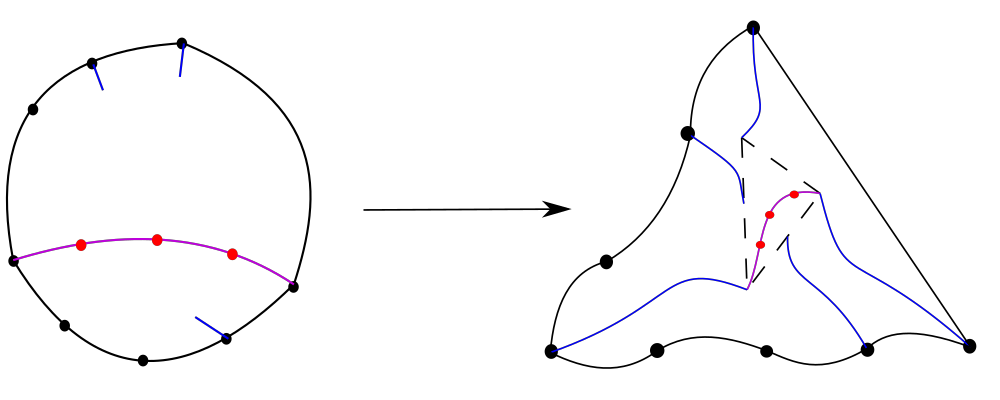}
 			\caption{A step in the construction of $\phi$.} 
 		\end{figure} 
 		
 		By the inductive hypothesis, $\phi_{n-1}(F)$ is $|V_F|$-connected. By Menger's Theorem, there exist disjoint paths $(P_v)_{v\in V_F}$ in $\phi_{n-1}(F)$ such that $P_v$ begins at the vertex $\phi_{n-1}(v)\cap \phi_{n-1}(F)$ and ends at some $p(v)\in \partial f$. In particular, since $\mathcal{G}$ is a plane graph, $p$ preserves the cyclic order of $\phi_{n-1}(V_F)$. We also denote by $D$ the diameter of $f$ with ends $p(x)$ and $p(y)$, where $x$ and $y$ are the ends of the ear $G_n\setminus G_{n-1}$. 
 		
 		For $v\in V_F$ we set $\phi_n(v)=\phi_{n-1}(v)\cup P_v$. The vertices of the ear $G_n\setminus G_{n-1}$ we map arbitrarily (but in the correct order) to single vertices of $D$. Each edge of the ear is mapped to the subpath of $D$ between the images of its ends. The faces $F'$ and $F$ are mapped to the corresponding slices induced on $f$ by $D$. Every other vertex, edge, or face of $G_n$ has the same image under $\phi_n$ as under $\phi_{n-1}$.
 		
 		Firstly, it follows from the construction that $\phi_n:G_n\rightarrow H_n$ is an $IG_n$. 
 		
 		Moreover, for $v\in V_F$, by the inductive hypothesis, $\phi_{n-1}(v)$ is a path. Also, $P_v$ is a path and, again by the inductive hypothesis, $\phi_{n-1}(v)\cap P_v$ is a single vertex. Therefore, $\phi_n(v)$ is a path. Trivially, the same holds for vertices in $V(G_n)\setminus V_F$. 
 		
 		Additionally, both $F'$ and $F''$ are mapped to slices of connectivity $2L+l$. Since $|V_{F'}|+|V_{F''}|\leq |\partial F'|+|\partial F''|=2L+l$, both of these slices are adequately connected. We have also ensured that they intersect the appropriate vertex images under $\phi_n$ only at one end vertex each and in the correct cyclic order. These same properties hold, by the inductive hypothesis, for every other face image of $G_n$. This concludes the induction, and the proof of Lemma \autoref{finish}.
 	  
 		\end{proof} 
 	
 	\begin{remark}
 	The above construction is quite manageable, and perhaps can be modified to tackle the problem of minor-universal elements for 2-complexes embeddable in $\mathbb{S}^3$, mentioned in (\hspace{-1mm}\cite{G}, p.12).
 	\end{remark}
 	
 	\bibliography{minor-universal}
 	\bibliographystyle{plain}
	 
\end{document}